\DeclareMathAlphabet{\mathzc}{OT1}{pzc}{m}{it}
\newcommand{\bburl}[1]{\textcolor{blue}{\url{#1}}}
\def\vecsign#1{\rule[1.388\LMex]{\dimexpr#1-2.5pt}{.36\LMpt}%
  \kern-6.0\LMpt\mathchar"017E}
\newcommand{\burl}[1]{\textcolor{blue}{\url{#1}}}
\newcommand{\abs}[1]{\left|#1\right|}
\numberwithin{equation}{section}
\newtheorem{thm}{Theorem}[section]
\newtheorem{defi}[thm]{Definition}
\newtheorem{cla}[thm]{Claim}
\theoremstyle{plain}
\newtheorem{lemma}[thm]{Lemma}
\newtheorem{theorem}[thm]{Theorem}
\newcommand\be{\begin{equation}}
\newcommand\ee{\end{equation}}
\newcommand\bee{\begin{equation*}}
\newcommand\eee{\end{equation*}}
\newcommand\bea{\begin{eqnarray}}
\newcommand\eea{\end{eqnarray}}
\newcommand\beae{\begin{eqnarray*}}
\newcommand\eeae{\end{eqnarray*}}
\newcommand\bi{\begin{itemize}}
\newcommand\ei{\end{itemize}}
\newcommand\ben{\begin{enumerate}}
\newcommand\een{\end{enumerate}}
\newcommand\bc{\begin{center}}
\newcommand\ec{\end{center}}
\newcommand\ba{\begin{array}}
\newcommand\ea{\end{array}}
\newcommand{\Z}{\ensuremath{\mathbb{Z}}}
\newcommand{\Q}{\mathbb{Q}}
\newcommand\frakfamily{\usefont{U}{yfrak}{m}{n}}
\DeclareTextFontCommand{\textfrak}{\frakfamily}
\newcommand\leg[2]{{#1\overwithdelims () #2}}
\newcommand{\hr}[1]{\href{#1}{\url{#1}}}
\newsavebox\EllipticCurve
\title{Class Numbers and Pell's Equation $x^2 + 105y^2 = z^2$}
\author{Thomas Jaklitsch}
\email{\textcolor{blue}{\href{mailto:thomasjaklitsch@college.harvard.edu}{thomasjaklitsch@college.harvard.edu}}}
\address{Department of Mathematics, Harvard University, Cambridge, MA 02138}
\author{Thomas C. Martinez}
\email{\textcolor{blue}{\href{mailto:tmartinez@hmc.edu}{tmartinez@hmc.edu}}}
\address{Department of Mathematics, Harvey Mudd College, Claremont, CA 90701}
\author{Steven J. Miller}
\email{\textcolor{blue}{\href{mailto:sjm1@williams.edu}{sjm1@williams.edu}},  \textcolor{blue}{\href{Steven.Miller.MC.96@aya.yale.edu}{Steven.Miller.MC.96@aya.yale.edu}}}
\address{Department of Mathematics and Statistics, Williams College, Williamstown, MA 01267}
\author{Sagnik Mukherjee}
\email{\textcolor{blue}{\href{mailto:smukherjee@cmi.ac.in}{smukherjee@cmi.ac.in}}}
\address{Department of Mathematics, Chennai Mathematical Institute, Siruseri, Chennai, Tamilnadu-603103}
\thanks{We thank Amnon Yekutieli for introducing us to the problem and sharing his work on the subject, and our colleagues from the 2021 Polymath REU, Leart Ajvazaj, Manyi Guo, Dylan Jamner, Yuan Lu, Jonathan Marvel-Zuccola, Sydney Morgan, and Bangqi (Blair) Yuan, for numerous helpful conversations and comments, especially Sydney and Blair, who worked with the third named author on a preliminary research project which was the springboard for this work. Those lectures, notes and questions were a valuable starting point for the present paper.}
\subjclass[2020]{ 11D09, 11R29}
\keywords{Class numbers, Pythagorean Triples, Pell Equation, Diophantine Equations, Group Structure}
\date{\today}
\begin{document}

\maketitle

\begin{abstract} Two well-studied Diophantine equations are those of Pythagorean triples and elliptic curves, for the first we have a
parametrization through rational points on the unit circle, and for the second we have a structure theorem for the group of rational solutions.
Recently Yekutieli discussed a connection between these two problems, and described the group structure of Pythagorean triples and the
number of triples for a given hypotenuse. In \cite{JMMM} we generalized these methods and results to Pell's equation. We find a similar group structure and count
on the number of solutions for a given $z$ to $x^2 + Dy^2 = z^2$ when $D$ is 1 or 2 modulo 4 and the class group of $\mathbb{Q}[\sqrt{-D}]$ is a free
$\Z_2$ module, which always happens if the class number is at most 2. 
In this paper we discuss the main results of \cite{JMMM} using some concrete examples in the case of $D=105$.
\end{abstract}

\tableofcontents

\section{Introduction}
\subsection{Background} 
A Diophantine equation is a polynomial of finite degree with integer coefficients, where we often are interested in the integer or rational solutions. These problems have fascinated mathematicians for millenia. Perhaps the most famous equation and solutions are the Pythagorean triples, triples of integers $(a, b, c)$ that satisfy \be f(x,y,z) \ = \ x^2 + y^2 - z^2 \ = \ 0.\ee  These triples can be represented in multiple forms, and a particularly useful one is to convert to points on the unit circle: $(a/c, b/c)$.


In studying Diophantine equations, there are several natural questions.

\begin{itemize}

\item Are there any solutions? Answering this is often non-trivial, as the analysis of Fermat's Last Theorem showed: If $x, y, z$ are positive integers then there are no solutions to $x^n + y^n = z^n$ if $n$ is an integer at least three, see \cite{TW, Wi}. Related problems, such as the Beal conjecture ($x^a + y^b = z^c$ with certain constraints) are still open. \\ \

\item If there are solutions, are there finitely many or infinitely many? For example, Mihailescu \cite{Met, Mih1, Mih2} proved Catalan's Conjecture: the only two consecutive positive powers are 8 and 9, explicitly, if $x, y > 0$ and $a, b > 1$ then the only integer solution to $x^a - y^b = 1$ is $3^2 - 2^3 = 1$.\\ \

\item If there are infinitely many solutions, is it possible to parameterize them, and if so is there a structure on the set of solutions?

\end{itemize}

The answers to these questions are well-known for Pythagorean triples. We can generate all solutions by utilizing Euclid's formula: $a=m^2-n^2$, $b = 2mn$, $c=m^2+n^2$. A common way to find this parameterization is shown in Figure \ref{Circle}. There we start with one known solution to the corresponding problem on the unit circle: $(-1)^2 + 0^2 = 1$. Given any rational pair $(x, y)$ on the unit circle, the slope of the line from $(-1,0)$ to $(x,y)$ is rational, and conversely given any line with rational slope emanating from $(-1,0)$ it intersects the unit circle at a rational point, see for example \cite{MT-B} for details.

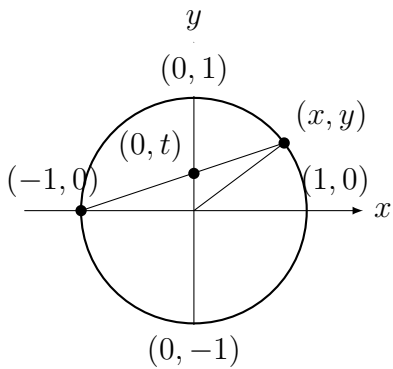
\begin{figure}[h]
\begin{center}
\begin{tikzpicture}[scale=1.5,cap=round,>=latex,baseline={(0,0)}]
    \draw[->] (-1.5cm,0cm) -- (1.5cm,0cm) node[right,fill=white] {$x$};
    \draw[->] (0cm,-1.5cm) -- (0cm,1.5cm) node[above,fill=white] {$y$};
    \draw[thick] (0cm,0cm) circle(1cm);
        \draw (-1.25cm,0cm) node[above=1pt] {$(-1,0)$}
    (1.25cm,0cm)  node[above=1pt] {$(1,0)$}
    (0cm,-1.25cm) node[fill=white] {$(0,-1)$}
    (0cm,1.25cm)  node[fill=white] {$(0,1)$};
\draw    (-1,-0) -- (0.8,0.6) ;
\draw    (0,-0) -- (0.8,0.6) ;
\fill[black] (-1,0) circle (0.5mm);
\fill[black] (0.8,0.6) circle (0.5mm)
node [above right] {$(x,y)$};
\fill[black] (0,0.33) circle (0.5mm)
node [above left] {$(0, t)$};
\end{tikzpicture}
\caption{A rational parametrization of the circle $x^2+y^2=1$.}
\label{Circle}
\end{center}
\end{figure}

What can we say about the \emph{structure} of the solutions to the Pythagorean equation? It is trivial to see that given a solution $(a, b, c)$ we can multiply each component by any integer $k > 1$ to obtain a new solution, we thus call triples that are relatively prime \emph{primitive} solutions. Are there other ways to generate new solutions from existing ones?

The answer is yes, and the motivation comes from an important generalization: elliptic curves (see \cite{Kn, Si, ST}). An elliptic curve in standard (or Weierstrass) form is the set of points $(x, y)$ satisfying the cubic equation
\begin{eqnarray}
    y^2\ =\ x^3 + ax + b,
\end{eqnarray} where $a,b\in\mathbb{Q}$ and the discriminant $4a^3+27b^2$ is not zero (we need the discriminant to be non-zero to avoid degenerate cases such as lines and parabolas). Mordell proved that the set of rational points forms a group, and the points of infinite order are isomorphic to $r$ copies of $\Z$.

\begin{theorem}[Mordell's theorem]
Let $E(\Q)$ be the set of rational points on an elliptic curve. Then $E(\Q)$ is a finitely generated abelian group of the form $\Z^r \oplus \mathbb{T}$, where $r$ is the geometric rank and $\mathbb{T}$ is a finite set of points of finite order.\footnote{Mazur \cite{Maz1, Maz2} proved that there are only 15 possibilities for $\mathbb{T}$: $\Z/N\Z$ for $N \in \{1,\dots, 10, 12\}$ and $\Z_2 \oplus \Z/2N\Z$ with $N \in \{1, \dots, 4\}$. The possible values of the rank are still a mystery. In 1938, Billing found an elliptic curve with rank $3$. The largest known rank increased slowly over the years, with the current record due to Elkies in 2006, and is rank at least $28$ (see \cite{Du} for a more comprehensive historical data on elliptic curve records). While originally it was conjectured that the rank can be arbitrarily large, now some models (see \cite{PPVW}) suggest that there may only be finitely many curves with rank exceeding 28.}
\end{theorem}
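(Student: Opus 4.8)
The plan is to give the classical \emph{descent} proof of Mordell's theorem, which splits into two largely independent inputs --- an arithmetic one (the Weak Mordell--Weil theorem) and a geometric/analytic one (the theory of heights) --- that are then combined by an infinite-descent argument in the spirit of Fermat; once $E(\Q)$ is shown to be finitely generated, the stated decomposition $\Z^r \oplus \mathbb{T}$ is just the structure theorem for finitely generated abelian groups.

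\textbf{Step 1: Heights.} For $P = (x,y) \in E(\Q)$ with $x = p/q$ in lowest terms, set $H(P) = \max(|p|,|q|)$, $h(P) = \log H(P)$, and $h(\mathcal{O}) = 0$. Working directly with the addition and duplication formulas for the group law (rational functions in the coordinates, with denominators controlled by a resultant), I would establish: (i) for each fixed $Q \in E(\Q)$ there is a constant $C_Q$ with $h(P+Q) \le 2h(P) + C_Q$ for all $P$; (ii) there is a constant $C$ with $h(2P) \ge 4h(P) - C$ for all $P$; and (iii) for every $B \in \R$ the set $\{P \in E(\Q) : h(P) \le B\}$ is finite. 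Property (iii) is immediate since there are only finitely many rationals of bounded height; (i) and (ii) are bookkeeping with the explicit formulas.

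\textbf{Step 2: Weak Mordell--Weil.} Here one proves $E(\Q)/2E(\Q)$ is finite. If $E$ has all of $E[2]$ defined over $\Q$, there is an explicit homomorphism $E(\Q)/2E(\Q) \hookrightarrow \big(\Q^\times/(\Q^\times)^2\big)^2$ whose image is supported on the finitely many primes dividing the discriminant, so finiteness follows by hand. In general, pass to a number field $K$ over which $E[2]$ becomes rational; the Kummer sequence in Galois cohomology embeds $E(K)/2E(K)$ into the subgroup of $H^1(\mathrm{Gal}(\overline{K}/K), E[2])$ unramified outside a fixed finite set $S$ of places, and finiteness of that group follows from the finiteness of the class number and the Dirichlet unit theorem (equivalently, Hermite--Minkowski applied to extensions of bounded degree unramified outside $S$); a restriction--corestriction argument then transfers finiteness from $E(K)/2E(K)$ back to $E(\Q)/2E(\Q)$. \emph{I expect this step to be the main obstacle:} the rest of the argument is essentially formal, but here one genuinely needs the deep finiteness theorems of algebraic number theory.

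\textbf{Step 3: Descent and conclusion.} Fix coset representatives $Q_1,\dots,Q_n$ of $E(\Q)/2E(\Q)$. Given $P \in E(\Q)$, choose $i_1$ with $P - Q_{i_1} \in 2E(\Q)$, write $P = 2P_1 + Q_{i_1}$, repeat to get $P_1 = 2P_2 + Q_{i_2}$, and so on. Applying (i) to the finitely many $-Q_i$ and then (ii), one obtains a constant $C'$ (depending only on $E$ and the $Q_i$) with $h(P_{j+1}) \le \tfrac14\big(h(P_j) + C'\big)$; hence $h(P_{j+1}) \le \tfrac34 h(P_j)$ whenever $h(P_j) \ge 2C'$. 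So after finitely many steps the $P_j$ all lie in the finite set $\{P : h(P) \le 2C'\}$, which together with $Q_1,\dots,Q_n$ therefore generates $E(\Q)$. Thus $E(\Q)$ is a finitely generated abelian group; by the structure theorem it is isomorphic to $\Z^r \oplus \mathbb{T}$ with $\mathbb{T}$ its (finite) torsion subgroup, and one defines the geometric rank of $E$ to be this integer $r$.
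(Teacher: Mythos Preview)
Your sketch is a correct and standard outline of the classical descent proof of Mordell's theorem (heights, Weak Mordell--Weil, Fermat-style descent, then the structure theorem for finitely generated abelian groups). There is nothing substantively wrong with it; the height inequalities you quote are the right ones, and you correctly identify the Weak Mordell--Weil step as the place where genuine arithmetic input (finiteness of class number, Dirichlet's unit theorem) is required.

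However, the paper does \emph{not} prove this theorem. Mordell's theorem appears in the introduction purely as motivational background: the authors state it, cite the literature (\cite{Kn, Si, ST}), and use it to draw an analogy between the group structure on $E(\Q)$ and the group structure they will develop for rational points on the conic $x^2 + Dy^2 = 1$. No proof is given or intended. So there is nothing to compare your proposal against --- you have supplied a proof where the paper simply quotes the result.
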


\begin{figure}
\begin{center}
\setlength{\unitlength}{.6pt}
\begin{picture}(200,180)
\put(-150,0){
\begin{picture}(200,180)
\put(-69.5,-59){\usebox{\EllipticCurve}} \thinlines
\put(10,95){\line(5,1){180}} \put(17,97){\circle*6}
\put(0,97){$P$} \put(79.5,109){\circle*6} \put(79.5,118){$Q$}
\put(151.5,122.5){\circle*6} \put(138,130){$R$}
\put(151.5,160){\qbezier[65](0,0)(0,-72.5)(0,-145)}
\put(151.5,59.5){\circle*6} \put(148,45){\makebox(0,0)[br]{$P\oplus
Q$}} \put(70,60){\makebox(0,0)[t]{\large$E$}}
\put(0,0){\makebox(220,-30)[c]{\small Addition of distinct points
$P$ and $Q$}}
\end{picture}
}
\put(120,0){
\begin{picture}(200,180)
\put(-69.5,-59.5){\usebox{\EllipticCurve}}
\put(10,104.5){\line(4,1){180}} \put(35,111){\circle*6}
\put(35,115){\makebox(0,0)[br]{$P$}} \put(170,145){\circle*6}
\put(165,145){\makebox(0,0)[br]{$R$}} \put(170,36){\circle*6}
\put(165,26){\makebox(0,0)[br]{$2P$}}
\put(170,165){\qbezier[65](0,0)(0,-72.5)(0,-145)}
\put(70,60){\makebox(0,0)[t]{\large$E$}}
\put(0,0){\makebox(220,-30)[c]{\small Adding a point $P$ to itself}}
\end{picture}
}
\end{picture}
\end{center}
\caption{\label{fig:ellcurve} The addition law on an elliptic curve.
In the second example the line is tangent to $E$ at $P$. Image courtesy of J. Silverman.}
\end{figure}

We demonstrate the group law on the set of rational solutions of an elliptic curve in Figure \ref{fig:ellcurve}. Thus given two points with rational coordinates, or even taking the same point twice, we can generate additional rational solutions.

Building on this success for elliptic curves, we revisit Pythagorean triples and see there is a similar structure. An excellent account of this was done recently by Yekutieli \cite{Ye}, who sets out a framework amenable to generalization by presenting Pythagorean triples as complex numbers. Thus operations on complex numbers (multiplication, complex conjugation) provide ways of creating new, different solutions from existing ones.



Explicitly, given a Pythaogrean triple $(a, b, c)$ we associate the complex number \be \zeta_{a,b,c} \ =\ x + iy\ =\ \frac{a}{c} + \frac{b}{c}i.\ee  There are four solutions where either $a$ or $b$ is zero: $1, i, -1, -i$. These are the units of $\Z[i] = \{a + ib: a, b \in \Z\}$, and correspond to trivial Pythagorean triples. We now consider $\zeta$ where both $a$ and $b$ are non-zero. We cannot have $a=b$, as that would lead to $\sqrt{2}$ being rational.\footnote{We would have $2a^2 = c^2$ and thus $\sqrt{2} = c/a \in \Q$.} A straightforward calculation shows that given such a solution $\zeta$ there are 7 other distinct conjugate solutions, we can multiply $\zeta$ by $i, i^2$ and $i^3$ (the units of $\Z[i]$ other than 1) and then we can take the complex conjugates of these four solutions. We illustrate this in Figure \ref{fig:complexpythagoras}, note without loss of generality given any Pythagorean triple not associated to a unit of $\Z[i]$ we may always adjust it, through multiplication by a unit and complex conjugation if needed, so that it lies in the shaded region (i.e., the second octant, or the part of the first quadrant where the imaginary part exceeds the real part).

\begin{figure}
\begin{center}
\scalebox{1}{\includegraphics{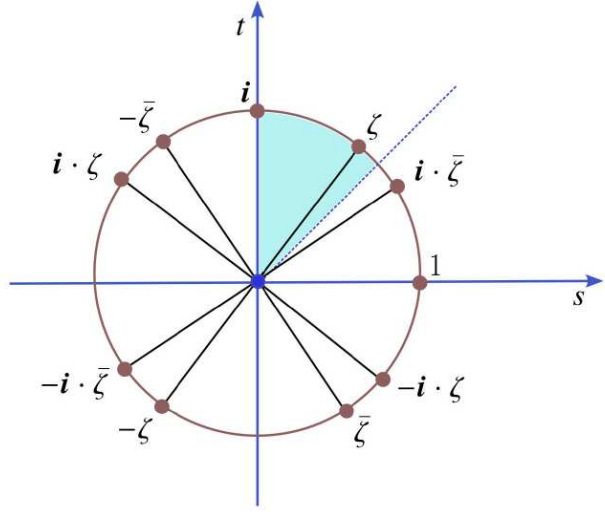}}
\caption{\label{fig:complexpythagoras} The four trivial solutions ($1, i, -1, -i$) and the seven conjugates to a non-trivial solution $\zeta$, which can be taken to lie in the second octant. Image from \cite{Ye}.}
\end{center}\end{figure}

Identifying Pythagorean triples with complex numbers yields a commutative group (if $z$ and $w$ are two complex numbers on the unit circle, the multiplicativity of the norm immediately implies $zw$ is also of norm 1 and hence on the unit circle). While rescaling a Pythagorean triple by $k$ does not change the complex number associated to it, multiplying the associated complex numbers generates new solutions. For example, the triple $(3,4,5)$ yields $\zeta_{3,4,5} = 3/5 + i 4/5$, and \be \zeta_{3,4,5}^2 \ = \ \left(\frac34 + \frac45 i\right) \left(\frac34 + \frac45 i\right) \ = \ -\frac{7}{25} + \frac{24}{25}i,\ee  which corresponds to the triple $(7, 24, 25)$, while \be  \zeta_{3,4,5} \ \ \zeta_{5,12,13} \ = \ \left(\frac35 + \frac45 i\right) \left(\frac{5}{13} + \frac{12}{13} i\right) \ = \ -\frac{33}{65} + \frac{56}{65} i, \ee  which corresponds to the triple $(33, 56, 65)$.


\subsection{Results of Yekutieli}

Yekutieli \cite{Ye} proved several results about the structure of the group of rational solutions to the unit circle version of the Pythagorean equation. 
We are interested in   the structure of positive integral solutions to the Diophantine equation
\begin{equation}\label{eq:pythagore}
    x^2+y^2 \ = \ z^2.
\end{equation}
To understand the main objective of \cite{Ye} first we need to introduce the \textit{normalized} solutions.

\begin{defi}\label{defi: normalized solutions for D=1}
A solution $(x_0,y_0,z_0)$ to \eqref{eq:pythagore} is defined to be normalized if $\gcd(x_0,y_0,z_0)=1$ and $x_0\le y_0$.
\end{defi}
 
Now, the main objective of \cite{Ye} is to find the number of normalized solutions\footnote{Note that given any solution $(x_0,y_0,z_0)$ to \eqref{eq:pythagore} we can generate infinitely many solutions out of this particular solutions, for example $(dx_0,dy_0,dz_0)$, $(ey_0,ex_0,ez_0)$ for any arbitrary integer $d$ and $e$. Thus by introducing this notion of normalized solution we are just considering the non-trivial generators of the solution space of \eqref{eq:pythagore}.} to \eqref{eq:pythagore} of the form $(a,b,c)$ for a given positive integer $c$.

In order to find the above, first he observed that every normalized solution $(a,b,c)$ to \eqref{eq:pythagore} can be encoded as the rational coordinate point $\zeta:= \frac{a}{c}+i\frac{b}{c}$ on the 2nd octant of the unit circle and vice-versa. Thus rational coordinate points on the 2nd octant of the unit circle is bijective to the set of normalized solution. Also, one can observe that \textit{up to sign}  the eight points (see \ref{fig:complexpythagoras})
$$\pm\zeta,\ \pm i\zeta, \ \pm \Bar{\zeta},\ \pm i\Bar{\zeta}$$
also correspond to the normalized solution $(a,b,c)$. Thus Yekutieli established a surjection, defined as \textbf{pt} from the set of rational coordinate points on unit circle onto the set of normalized solutions to \eqref{eq:pythagore}, by which he sends a rational coordinate point on the unit circle to the corresponding normalized solution, for example each of the above 8 points are mapped to $(a,b,c)$ via \textbf{pt}. So it is sufficient to focus on the rational coordinate points on the unit circle. 

Next, Yekutieli considered the rational coordinate points on the unit circle algebraically as $G(\Q)$, where
$$G(\Q) \ := \ \{a+ib:a,b\in \Q \text{ and } a^2+b^2=1\}.$$
Note that $G(\Q)$ is a group under complex multiplication. Yekutieli went one step further by proving that
$G(\Q)=U\times F$ where $U:=\{1,-1,i,-i\}$ and $F$ is a free group generated by\footnote{Note that such $x_0$ and $y_0$ always exist for every prime $p$ satisfying $p\equiv 1$(mod 4). In fact this choice of $x_0$ and $y_0$ is unique.}
$$\left\{\zeta_{p}:=\frac{x_0+y_0\sqrt{-1}}{x_0-y_0\sqrt{-1}}; \text{ where }p\equiv 1\ (\text{mod }4) \text{ and }x_0^2+y_0^2=p^2, x_0,y_0>0\right\}.$$
Thus we obtain a factorization of each of the elements of $G(\Q)$. Using this factorization he proved the main result of his paper.

\begin{thm}[Theorem 5.3 of \cite{Ye}]\label{thm:main_result_Ye}
    Let $c>1$ have prime factorization in $\Z$ as follows:
    $$c \ = \ p_1^{n_1}p_2^{n_2} \cdots p_k^{n_k}$$
    where the $p_i$'s are distinct primes and the $n_i$'s are positive integers.
    \begin{enumerate}
        \item If $p_i\equiv 1$ (mod 4) for all $i$, then the function \textbf{pt} restricts to a bijection
        $$pt:\{\zeta_{p_1}^{n_1}\zeta_{p_2}^{\pm n_2} \cdots \zeta_{p_k}^{\pm n_k}\}\to \textbf{PT}_c,$$
        where  $\textbf{PT}_c$ denotes the set of normalized solution of the form $(a,b,c)$. Thus there are $2^{k-1}$ distinct normalized solutions of the form $(a,b,c)$.
        \item If $p_i\not\equiv 1$ (mod 4) for some $i$ then $PT_c$ is empty i.e., there is no normalized solution of the form $(a,b,c)$.
    \end{enumerate}
\end{thm}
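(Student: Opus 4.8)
The strategy is to keep track, for each $\zeta \in G(\Q)$, of the positive integer that appears as the hypotenuse of the normalized solution $\mathbf{pt}(\zeta)$; call it $\mathrm{den}(\zeta)$, so that $\mathbf{PT}_c = \mathbf{pt}\big(\mathrm{den}^{-1}(c)\big)$ and it suffices to understand the set $\mathrm{den}^{-1}(c)$ together with the fibres of $\mathbf{pt}$ over it. By the decomposition $G(\Q) = U \times F$, with $U = \{1,i,-1,-i\}$ and $F$ free abelian on $\{\zeta_p : p \equiv 1 \pmod 4\}$, every $\zeta$ is uniquely $\zeta = i^{t}\prod_p \zeta_p^{m_p}$ with $t \in \{0,1,2,3\}$ and only finitely many $m_p \in \Z$ nonzero. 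The central step is the identity
\[
  \mathrm{den}\Big(i^{t}\prod_p \zeta_p^{m_p}\Big) \ = \ \prod_p p^{|m_p|},
\]
which I would prove by passing to the Gaussian integers: using $\zeta_p = \pi_p/\bar\pi_p$ with $N(\pi_p)=p$ and $i = (1+i)/(1-i)$, one writes $\zeta = \beta/\bar\beta$ with $\beta = (1+i)^{t}\prod_p \rho_p^{|m_p|}$, where $\rho_p = \pi_p$ if $m_p>0$ and $\rho_p = \bar\pi_p$ if $m_p<0$. Then $\zeta = \beta^2/N(\beta)$, and since $(1+i)^{2t} = (2i)^t$ the factor $2^t$ in $\beta^2$ cancels the factor $2^t = N\big((1+i)^t\big)$ in $N(\beta)$, so $\zeta$ has reduced numerator $i^{t}\prod_p \rho_p^{2|m_p|}$ and reduced denominator $\prod_p p^{|m_p|}$; these are coprime by unique factorization in $\Z[i]$, since $\pi_p$ and $\bar\pi_p$ never both divide the numerator. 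Hence $c = \prod_p p^{|m_p|}$. This already gives part~(2): $\mathrm{den}(\zeta)$ is always a product of primes $\equiv 1 \pmod 4$ (the inert primes $q \equiv 3 \pmod 4$ are real and so do not contribute, while $2$ enters only through the unit $(1+i)/(1-i) = i$), so if some $p_i \not\equiv 1 \pmod 4$ then $\mathrm{den}^{-1}(c) = \emptyset$ and $\mathbf{PT}_c = \emptyset$.

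For part~(1), fix $c = p_1^{n_1}\cdots p_k^{n_k}$ with every $p_i \equiv 1 \pmod 4$. The identity above says
\[
  \mathrm{den}^{-1}(c) \ = \ \big\{\, i^{t}\,\zeta_{p_1}^{\varepsilon_1 n_1}\cdots\zeta_{p_k}^{\varepsilon_k n_k} \ :\ t \in \{0,1,2,3\},\ \varepsilon_i \in \{\pm 1\} \,\big\},
\]
a set of exactly $4\cdot 2^{k}$ elements, pairwise distinct because $\langle i\rangle$ and $F$ are independent and $F$ is free. I would then check that every fibre of $\mathbf{pt}$ restricted to this set has exactly $8$ elements: for such $\zeta$ the eight points $\pm\zeta,\ \pm i\zeta,\ \pm\bar\zeta,\ \pm i\bar\zeta$ all lie in $\mathrm{den}^{-1}(c)$ — multiplication by a unit does not change the normalized solution, and conjugation sends each $\zeta_p$ to $\zeta_p^{-1}$, so it only flips the signs $\varepsilon_i$ — and they are pairwise distinct because $c>1$ forces $\prod_i \zeta_{p_i}^{2\varepsilon_i n_i}\in F\setminus\{1\}$, which is never a unit; that these eight exhaust the fibre is the defining property of $\mathbf{pt}$ recalled above. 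Hence $|\mathbf{PT}_c| = (4\cdot 2^{k})/8 = 2^{k-1}$.

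Finally I would see that the listed set $\{\zeta_{p_1}^{n_1}\zeta_{p_2}^{\pm n_2}\cdots\zeta_{p_k}^{\pm n_k}\}$ is a system of fibre representatives. Writing $\varepsilon = (\varepsilon_1,\dots,\varepsilon_k)$, a fibre is the orbit of $\prod_i \zeta_{p_i}^{\varepsilon_i n_i}$ under multiplication by $U$ together with $\varepsilon \mapsto -\varepsilon$ (conjugation), so the fibres are indexed by $\{\pm 1\}^{k}/\{\pm 1\}$; the $2^{k-1}$ elements with $t=0$ and $\varepsilon_1 = +1$ meet each class exactly once. Injectivity of $\mathbf{pt}$ on this set is then immediate: two of its elements lying in a common fibre would either differ by a unit — forcing equal sign vectors, since both lie in $F$ — or be complex conjugates of each other — forcing $-n_1 = n_1$, impossible as $n_1 \ge 1$. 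Thus $\mathbf{pt}$ restricts to a bijection from this set onto $\mathbf{PT}_c$.

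The one genuinely nontrivial point is the denominator identity; everything after it is formal group theory in $F$ plus an eight-to-one count, but that identity really uses unique factorization in $\Z[i]$ — together with the explicit shape of the generators $\zeta_p$ and the relation $(1+i)^2 = 2i$ — to rule out any cancellation between the Gaussian numerator $i^{t}\prod_p \rho_p^{2|m_p|}$ and the integer denominator $\prod_p p^{|m_p|}$.
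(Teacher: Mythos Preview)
Your argument is correct, and it follows the route the paper outlines. Note that the paper does not actually give its own proof of this theorem: it is quoted as Theorem~5.3 of \cite{Ye}, and the surrounding text only sketches Yekutieli's strategy (build the surjection $\mathbf{pt}$ with $8$-element fibres, establish the decomposition $G(\Q)=U\times F$, and read off the count from the factorization). Your plan fills in exactly this sketch, and your ``denominator identity''
\[
\mathrm{den}\Big(i^{t}\prod_p \zeta_p^{m_p}\Big)=\prod_p p^{|m_p|}
\]
is precisely the $D=1$ instance of what the paper later records (for general $D$) as Theorems~\ref{thm: Theorem 3.6 of JMMM} and~\ref{thm: Theorem 3.9 of JMMM}, after which the $2^{k-1}$ count is the content of Theorem~\ref{thm: Theorem 4.1 of JMMM}. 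So your approach and the paper's (attributed) approach coincide; your Gaussian-integer computation with $(1+i)^2=2i$ to dispose of the prime $2$ is the one place where you are more explicit than the paper's summary.
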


\subsection{Main results Jaklitsch et. al.}
In \cite{JMMM} we generalized the approach of Yekutieli to inspect the structure of integral solutions to 
\begin{equation}\label{eq:Pell Eq}
    x^2+Dy^2 \ = \ z^2
\end{equation}
for $D>1$. The goal is the same as that of \cite{Ye}, i.e., finding the number of normalized solutions to \eqref{eq:Pell Eq} of the form $(a,b,c)$ for a given positive integer $c$. However, we had to update the definition of normalized solutions for $D>1$.

\begin{defi}\label{defi:normalized solutions for D>1}
    A solution $(x_0,y_0,z_0)$ to \eqref{eq:Pell Eq} is defined to be normalized if $\gcd(x_0,y_0,z_0)=1$.
\end{defi}

Note that this definition is slightly different that Definition \ref{defi: normalized solutions for D=1} as in case of $D>1$ if we swap $(x_0,y_0,z_0)$ to $(y_0,x_0,z_0)$ then it will no longer be a solution to \eqref{eq:Pell Eq}. 

In order to achieve our goal, we generalized the arguments of Yekutieli and  focused on rational coordinate points on the ellipse $x^2+Dy^2=1$, which we can algebraically visualise as the following:
$$G_D(\Q) \ := \ \{a+b\sqrt{-D}: a,b\in \Q \text{ and }a^2+Db^2=1\}.$$
Note that this is also a group under the complex multiplication. One of the most important result of our paper is proving that $G_D(\Q)=U\times F$ where $U=\{1,-1\}$ and $F$ is a free group generated over the following set\footnote{Note the similarity of this result with the Mordell-Weil theorem for elliptic curves.}:
$$\left\{\zeta_{p}\ := \ \frac{x_0+y_0\sqrt{-D}}{p}; \text{ where } \left(\frac{-D}{p}\right)=1 \text{ and }x_0^2+Dy_0^2=p^2, x_0,y_0>0 \right\}$$
however we had to impose an \textbf{extra condition} of the class group $C(-4D)$ being a free $\Z_2$-module.

If we assume this extra condition, then due to Lemmas 3.3, 3.4 and 3.5 of \cite{JMMM}, such $\zeta_p$'s are well defined. We provide the statement of Lemma 3.5 of \cite{JMMM}, which uses Lemmas 3.3 and 3.4 of \cite{JMMM} and gives us that the $\zeta_p$'s are well defined.

\begin{lemma}[Lemma 3.5 of \cite{JMMM}]\label{lem: Lemma 3.5 of JMMM}
Let $p$ be an odd prime. Then the equation $x^2 + Dy^2 = z^2$ has a unique normalized solution of the form $(a,b,p)$.
\end{lemma}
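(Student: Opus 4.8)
The plan is to translate the problem into ideal arithmetic in the ring of integers of $K := \Q(\sqrt{-D})$ and to let the class-group hypothesis do the real work. Since $D \equiv 1$ or $2 \pmod 4$ we have $-D \not\equiv 1 \pmod 4$, so the ring of integers is $\mathcal{O} := \Z[\sqrt{-D}]$, its discriminant is $-4D$, and (as $D > 1$) its only units are $\pm 1$. A normalized solution $(a,b,p)$ of $x^2 + Dy^2 = z^2$ corresponds exactly to an element $\alpha = a + b\sqrt{-D} \in \mathcal{O}$ with $N(\alpha) = \alpha\bar\alpha = a^2 + Db^2 = p^2$ and $\gcd(a,b,p) = 1$; moreover $\gcd(a,b,p) = 1$ is equivalent to $p \nmid \alpha$ in $\mathcal{O}$, since $p \mid \alpha$ forces $p \mid a$ and $p \mid b$. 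The four variants $\pm\alpha, \pm\bar\alpha$ all satisfy the same conditions, so ``unique'' must be read up to these sign symmetries (equivalently, one pins down $a,b>0$ as in the definition of $\zeta_p$), and I will produce exactly one $\alpha$ up to sign and conjugation.

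Next I would settle the splitting type of $p$ (this is where Lemmas 3.3--3.4 of \cite{JMMM} are invoked). If $p \mid D$ or $\left(\frac{-D}{p}\right) = -1$, then $(p)$ is ramified or inert, and a short norm computation shows any $\alpha$ with $N(\alpha) = p^2$ satisfies $(\alpha) = (p)$, hence $\alpha = \pm p$ and $b = 0$, which is not normalized; these are precisely the primes for which $\zeta_p$ is undefined, so we reduce to $p \nmid D$ with $\left(\frac{-D}{p}\right) = 1$. Then $(p) = \mathfrak{p}\bar{\mathfrak{p}}$ with $\mathfrak{p} \neq \bar{\mathfrak{p}}$ and $N(\mathfrak{p}) = p$. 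For $\alpha \in \mathcal{O}$ with $N(\alpha) = p^2$, unique factorization of ideals in $\mathcal{O}$ forces $(\alpha) \in \{\mathfrak{p}^2,\ \mathfrak{p}\bar{\mathfrak{p}} = (p),\ \bar{\mathfrak{p}}^2\}$. The middle case gives $\alpha = \pm p$ (not normalized), and the first and last are interchanged by complex conjugation, so up to conjugation we may assume $(\alpha) = \mathfrak{p}^2$; then $\alpha$ is determined up to the unit $\pm 1$. This already yields uniqueness (up to sign and conjugation) of a normalized solution $(a,b,p)$.

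For existence I must show $\mathfrak{p}^2$ is an actual principal ideal, and this is exactly where the standing hypothesis enters: since $C(-4D)$ is a free $\Z_2$-module (in particular when the class number is at most $2$), every ideal class is killed by $2$, so $[\mathfrak{p}^2] = [\mathfrak{p}]^2$ is trivial and $\mathfrak{p}^2 = (\alpha_0)$ for some $\alpha_0 \in \mathcal{O}$. Writing $\alpha_0 = a + b\sqrt{-D}$ and choosing signs so that $a, b > 0$, we get $a^2 + Db^2 = N(\alpha_0) = N(\mathfrak{p})^2 = p^2$; here $b \neq 0$, for otherwise $a^2 = p^2$ gives $(p) = (\alpha_0) = \mathfrak{p}^2$, whence $\mathfrak{p} = \bar{\mathfrak{p}}$, a contradiction; and $p \nmid \alpha_0$, for otherwise $\bar{\mathfrak{p}} \mid (p) \mid (\alpha_0) = \mathfrak{p}^2$ again forces $\mathfrak{p} = \bar{\mathfrak{p}}$. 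Hence $(a,b,p)$ is a normalized solution, and by the previous paragraph it is the only one up to sign; pinning $a,b>0$ makes it literally unique, with $\alpha_0/p = \zeta_p$.

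The one genuinely load-bearing step is the passage from $\mathfrak{p}$ to a principal $\mathfrak{p}^2$: without control on the class group there need be no such $\alpha_0$, and indeed for a split prime whose ideal class has order exceeding $2$ the equation has no normalized solution $(a,b,p)$ at all. So I expect the essential content to be exactly the class-group input (Lemmas 3.3--3.4), while the surrounding bookkeeping — the identification $\mathcal{O} = \Z[\sqrt{-D}]$, the norm computations, and the sign normalization — is routine.
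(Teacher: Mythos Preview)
The survey paper does not actually prove this lemma; it only quotes the statement from \cite{JMMM} and records that the proof there rests on Lemmas~3.3 and~3.4 of that reference together with the standing hypothesis that $C(-4D)$ is a free $\Z_2$-module. So there is no in-paper proof to compare against in detail.

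That said, your argument is correct and is precisely the mechanism the surrounding text points to: pass to ideal factorization in $\mathcal{O}=\Z[\sqrt{-D}]$ (valid since $D\equiv 1,2\pmod 4$), observe that for a split prime $(p)=\mathfrak{p}\bar{\mathfrak{p}}$ any $\alpha$ of norm $p^2$ has $(\alpha)\in\{\mathfrak{p}^2,(p),\bar{\mathfrak{p}}^2\}$, discard the middle case as non-normalized, and use the $2$-torsion class-group hypothesis to force $\mathfrak{p}^2$ principal for existence. This is exactly the role the paper assigns to the class-group assumption when it says the $\zeta_p$ are well defined, so your proposal and the cited argument are aligned. You have also correctly noted two points the paper's statement leaves implicit: the hypothesis $\bigl(\frac{-D}{p}\bigr)=1$ (without which there is no normalized solution at all), and that ``unique'' is to be read up to sign and conjugation, i.e., after normalizing $a,b>0$.
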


Using Theorem 3.6 and Theorem 3.9 of \cite{JMMM}, we proved that $G_D(\Q)=U\times F$. We now state these important theorems.

\begin{thm}[Theorem 3.6 of \cite{JMMM}]\label{thm: Theorem 3.6 of JMMM}
    Let $z = \frac{a + b\sqrt{-D}}{c} \in G_D(\mathbb{Q})$ where $(a,b) = 1$, $c > 1$. Let $c = p_1^{\alpha_1} \cdots p_k^{\alpha_k}$. Then $$ z \ = \  \pm \zeta_{p_1}^{\pm \alpha_1}\cdots \zeta_{p_k}^{\pm \alpha_k}.$$
\end{thm}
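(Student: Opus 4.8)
The plan is to lift the rational point $z$ to an algebraic integer, factor the ideal it generates into prime ideals, and recover the exponents by matching against the ideals attached to the $\zeta_{p_i}$. Work in $\mathcal{O} := \Z[\sqrt{-D}]$, the order of discriminant $-4D$; it is the full ring of integers of $\Q(\sqrt{-D})$ precisely when $D$ is squarefree and $D \equiv 1$ or $2 \pmod 4$ (as for $D=105$), hence a Dedekind domain, and its unit group is $\{\pm 1\}$ since $D>1$. Set $\xi := a + b\sqrt{-D} \in \mathcal{O}$; the relation $a^2 + Db^2 = c^2$ encoding $z = \xi/c \in G_D(\Q)$ becomes $N(\xi) = \xi\bar\xi = c^2$, where $\bar{\cdot}$ denotes complex conjugation. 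I will use $\gcd(a,b)=1$ repeatedly.

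First I would determine which rational primes can divide $c$. If $p \mid c$ then $p^2 \mid a^2 + Db^2$, so $a^2 \equiv -Db^2 \pmod p$. If $\left(\frac{-D}{p}\right) = -1$ this forces $p \mid a$ and $p \mid b$, contradicting $\gcd(a,b)=1$; if $p$ is odd with $p \mid D$ then $p \mid a$, whence $p^2 \mid Db^2$ and (using $D$ squarefree) $p \mid b$, again a contradiction; and $p=2$ is impossible since $a^2 + Db^2 \equiv 0 \pmod 4$ together with $D \equiv 1$ or $2 \pmod 4$ again forces $a,b$ both even. Hence every $p \mid c$ is odd and splits, $p\mathcal{O} = \mathfrak{p}\bar{\mathfrak{p}}$ with $\bar{\mathfrak{p}} \neq \mathfrak{p}$, so $\left(\frac{-D}{p}\right)=1$ and $\zeta_p$ is well defined by Lemma~\ref{lem: Lemma 3.5 of JMMM} (whose proof invokes the hypothesis that $C(-4D)$ is a free $\Z_2$-module). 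Now I would factor $(\xi)$: applying $v_{\mathfrak{p}}$ to $\xi\bar\xi = c^2$ and using that conjugation interchanges $\mathfrak{p}$ and $\bar{\mathfrak{p}}$ gives $v_{\mathfrak{p}}(\xi) + v_{\bar{\mathfrak{p}}}(\xi) = 2\,v_p(c)$; since $\gcd(a,b)=1$ means $(p) = \mathfrak{p}\bar{\mathfrak{p}}$ does not divide $(\xi)$, one of these valuations vanishes, so, writing $\alpha_p := v_p(c)$ and naming $\mathfrak{p}_p$ the prime over $p$ with nonzero valuation, $v_{\mathfrak{p}_p}(\xi) = 2\alpha_p$ and $v_{\bar{\mathfrak{p}}_p}(\xi) = 0$. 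As $N(\xi) = c^2$ is supported only on primes dividing $c$, this pins down $(\xi)$:
\begin{equation*}
(\xi) \ = \ \prod_{p \mid c} \mathfrak{p}_p^{\,2\alpha_p}.
\end{equation*}

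Next I would run the identical computation on $\zeta_p = \beta_p/p$, where $\beta_p := x_0 + y_0\sqrt{-D}$ is the normalized solution of Lemma~\ref{lem: Lemma 3.5 of JMMM} with $N(\beta_p)=p^2$: the same argument gives $(\beta_p) = \mathfrak{q}_p^{\,2}$ for one of the two primes $\mathfrak{q}_p$ above $p$, while $\zeta_p^{-1} = \bar\zeta_p = \bar\beta_p/p$ with $(\bar\beta_p) = \bar{\mathfrak{q}}_p^{\,2}$. Since $\{\mathfrak{q}_p, \bar{\mathfrak{q}}_p\}$ is exactly the pair of primes above $p$, it contains $\mathfrak{p}_p$, so I would pick $\epsilon_p \in \{+1,-1\}$ and $\gamma_p \in \{\beta_p, \bar\beta_p\}$ with $(\gamma_p) = \mathfrak{p}_p^{\,2}$ and $\gamma_p/p = \zeta_p^{\epsilon_p}$. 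Then $\bigl(\prod_{p\mid c}\gamma_p^{\alpha_p}\bigr) = \prod_{p\mid c}\mathfrak{p}_p^{\,2\alpha_p} = (\xi)$, so $\xi$ and $\prod_{p\mid c}\gamma_p^{\alpha_p}$ generate the same ideal, hence differ by a unit $\pm1$. Dividing by $c = \prod_{p\mid c}p^{\alpha_p}$,
\begin{equation*}
z \ = \ \frac{\xi}{c} \ = \ \pm \prod_{p \mid c}\Bigl(\frac{\gamma_p}{p}\Bigr)^{\alpha_p} \ = \ \pm \prod_{p \mid c}\zeta_p^{\,\epsilon_p \alpha_p},
\end{equation*}
which, with $p$ ranging over $p_1,\dots,p_k$ and $\alpha_p$ the exponents $\alpha_i$, is the assertion.

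The step I expect to be the crux is the bookkeeping at the split primes: a priori the $p$-part of $(\xi)$ could be a mixed power $\mathfrak{p}_p^{\,j}\bar{\mathfrak{p}}_p^{\,2\alpha_p - j}$ with $0 < j < 2\alpha_p$, and the real content of the proof is that $\gcd(a,b)=1$ forces $j \in \{0, 2\alpha_p\}$, concentrating the whole $p$-contribution on a single conjugate prime — exactly what allows one power $\zeta_p^{\pm\alpha_p}$ to account for it. The other point needing care is that the argument presupposes $\zeta_p$ exists, i.e.\ that $\mathfrak{p}^2$ is principal for every prime $\mathfrak{p}$ of $\mathcal{O}$, equivalently that every ideal class is $2$-torsion; this is precisely where the hypothesis on $C(-4D)$ enters, via Lemma~\ref{lem: Lemma 3.5 of JMMM}. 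Minor loose ends are the (here excluded) possibilities that $2$ or a ramified prime divides $c$, disposed of as above, and non-squarefree $D$, handled by working in the order $\Z[\sqrt{-D}]$ with ideals coprime to its conductor. The same proof may alternatively be phrased as an induction on the number of prime factors of $c$ with multiplicity, showing that exactly one of $z\zeta_p$, $z\zeta_p^{-1}$ has denominator $c/p$ in lowest terms and peeling it off.
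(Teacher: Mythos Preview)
Your argument is sound and is essentially the natural proof: lift to $\mathcal{O}=\Z[\sqrt{-D}]$, show every prime dividing $c$ is split, use $\gcd(a,b)=1$ to force the $p$-part of $(\xi)$ onto a single conjugate prime, and then match ideals against those coming from the $\zeta_p$; the $2$-torsion hypothesis on $C(-4D)$ enters exactly where you say, to guarantee $\mathfrak{p}^2$ is principal so that $\zeta_p$ exists.

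However, there is nothing to compare against in this paper. The present article is an expository survey: it merely \emph{states} Theorem~3.6 as a result of \cite{JMMM} and then, in Section~2, \emph{verifies} it numerically for $D=105$ and the specific values $c\in\{11\cdot 13,\ 13\cdot 19,\ 11\cdot 19,\ 11^2\cdot 13^3,\ 11\cdot 13\cdot 19\}$ by explicitly computing the normalized solutions and checking that each equals some $\pm\zeta_{11}^{\pm\alpha_1}\zeta_{13}^{\pm\alpha_2}\zeta_{19}^{\pm\alpha_3}$. No general argument is given here; the proof lives in \cite{JMMM}. Your ideal-theoretic approach is almost certainly what \cite{JMMM} does (and your closing remark about the inductive ``peel off one $\zeta_p$'' variant is another standard packaging of the same content), but the paper in front of you neither confirms nor contradicts that.

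One small point worth tightening: your handling of non-squarefree $D$ (``work in the order $\Z[\sqrt{-D}]$ with ideals coprime to its conductor'') is the right idea but is only a gesture; in a non-maximal order one must check that the primes $p\mid c$ are indeed coprime to the conductor, which follows from the same $\gcd(a,b)=1$ argument you used to exclude ramified primes. For the case treated in this paper ($D=105$, squarefree and $\equiv 1\pmod 4$) this is of course moot.
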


\begin{thm}[Theorem 3.9 of \cite{JMMM}]\label{thm: Theorem 3.9 of JMMM}
    For some $z \in G_D(\mathbb{Q})$ let $z = \pm\zeta_{p_1}^{\pm \alpha_1} \cdots \zeta_{p_k}^{\pm \alpha_k}$ where $\leg{-D}{p_i} = 1$ for all $i$. Then if $z$ corresponds to the normalized triple $(a, b, c)$, we must have $c = p_1^{\alpha_1} \cdots p_k^{\alpha_k}$.
\end{thm}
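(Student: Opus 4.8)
The plan is to translate the statement about the denominator $c$ into a statement about ideals in $\mathcal{O} := \Z[\sqrt{-D}]$, which is the full ring of integers of $\Q(\sqrt{-D})$ because $D \equiv 1$ or $2 \pmod 4$. Write $\pi_{p} := x_0 + y_0\sqrt{-D}$ for the generator occurring in $\zeta_{p} = \pi_{p}/p$, so that $N(\pi_{p}) = x_0^2 + Dy_0^2 = p^2$; since $N(\zeta_{p}) = 1$ we also have $\zeta_{p}^{-1} = \overline{\zeta_{p}} = \overline{\pi_{p}}/p$. Thus every factor $\zeta_{p_i}^{\pm 1}$ equals $\rho_i/p_i$ with $\rho_i \in \{\pi_{p_i}, \overline{\pi_{p_i}}\}$, and clearing denominators in $z = \pm\zeta_{p_1}^{\pm\alpha_1}\cdots\zeta_{p_k}^{\pm\alpha_k}$ gives $z = \mu/N$, where $N := p_1^{\alpha_1}\cdots p_k^{\alpha_k}$ and $\mu := \pm\,\rho_1^{\alpha_1}\cdots\rho_k^{\alpha_k} \in \mathcal{O}$. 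Writing $\mu = A + B\sqrt{-D}$ with $A, B \in \Z$ (so that $N(\mu) = A^2 + DB^2 = N^2$), it then suffices to prove $\gcd(A,B,N) = 1$: a point of $G_D(\Q)$ has a \emph{unique} representative $\tfrac{a + b\sqrt{-D}}{c}$ with $c > 0$ and $\gcd(a,b,c) = 1$ (an elementary fact, using $a^2 + Db^2 = c^2$ to compare two such representatives prime by prime), so if $\tfrac{A+B\sqrt{-D}}{N}$ is already reduced it is that representative and $c = N$.

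The key step is to pin down the ideal $(\pi_{p_i})$. Since $\leg{-D}{p_i} = 1$, the prime $p_i$ is odd with $p_i \nmid D$, hence $p_i \nmid \operatorname{disc}\Q(\sqrt{-D}) = -4D$, so $p_i$ is unramified and splits: $(p_i) = \mathfrak{p}_i\overline{\mathfrak{p}}_i$ with $\mathfrak{p}_i \neq \overline{\mathfrak{p}}_i$. The ideal $(\pi_{p_i})$ has norm $p_i^2$, so it equals $\mathfrak{p}_i^2$, $\mathfrak{p}_i\overline{\mathfrak{p}}_i = (p_i)$, or $\overline{\mathfrak{p}}_i^2$. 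The middle possibility would force $\pi_{p_i} = u p_i$ for a unit $u$; but for $D > 1$ the only units of $\mathcal{O}$ are $\pm 1$, so this would give $y_0 = 0$, contradicting $y_0 > 0$. Hence $(\pi_{p_i})$, and therefore also $(\rho_i)$, equals $\mathfrak{q}_i^{2}$ for a single prime $\mathfrak{q}_i$ lying over $p_i$ (namely $\mathfrak{q}_i \in \{\mathfrak{p}_i, \overline{\mathfrak{p}}_i\}$). Because the $p_i$ are distinct, the $\mathfrak{q}_i$ are distinct primes, so the prime factorization of $(\mu)$ is exactly $\prod_i \mathfrak{q}_i^{2\alpha_i}$; in particular, for each $i$, exactly one of $\mathfrak{p}_i, \overline{\mathfrak{p}}_i$ divides $(\mu)$, never both.

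Now assume $\gcd(A,B,N) \neq 1$. Then some $p_i$ divides both $A$ and $B$, hence $p_i \mid \mu$ in $\mathcal{O}$, i.e.\ $(p_i) = \mathfrak{p}_i\overline{\mathfrak{p}}_i$ divides $(\mu)$ — so both $\mathfrak{p}_i$ and $\overline{\mathfrak{p}}_i$ occur in the factorization of $(\mu)$, contradicting the previous paragraph. Therefore $\gcd(A,B,N) = 1$, and since $(A,B,N)$ already solves $x^2+Dy^2=z^2$ with $\gcd = 1$ and $N>0$, uniqueness of the reduced representative shows the normalized triple attached to $z$ has last coordinate $c = N = p_1^{\alpha_1}\cdots p_k^{\alpha_k}$. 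The sign ambiguities (the $\pm$ in front of the product, and the choice of $z$ versus $-z$) are irrelevant since $c$ depends only on $z$ up to sign.

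I expect the one genuinely substantive point to be the identification $(\pi_{p_i}) = \mathfrak{q}_i^2$ — that the ideal generated by $\pi_{p_i}$ is the square of a prime rather than the principal ideal $(p_i)$ — which is exactly where $y_0 > 0$ and the triviality of the unit group are used; the rest (splitting of $p_i$, unique factorization of ideals in $\mathcal{O}$, and the elementary uniqueness of reduced representatives) is routine. It is perhaps worth remarking that, in contrast with Theorem \ref{thm: Theorem 3.6 of JMMM}, this direction does not appear to require the hypothesis that $C(-4D)$ is a free $\Z_2$-module: only the facts that $\mathcal{O}$ is the maximal order and that $\mathcal{O}^\times = \{\pm 1\}$ enter.
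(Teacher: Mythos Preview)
The paper does not actually prove Theorem~\ref{thm: Theorem 3.9 of JMMM}: it merely states the result (imported from \cite{JMMM}) and then \emph{verifies} it numerically in the five worked examples of Section~2, so there is no argument here against which to compare your proposal line by line.

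Evaluated on its own merits, your proof is correct. The translation to ideals in $\mathcal{O}=\Z[\sqrt{-D}]$ is the natural route, and the crucial identification $(\pi_{p_i})=\mathfrak{q}_i^{2}$ (rather than $\mathfrak{p}_i\overline{\mathfrak{p}}_i$) is justified exactly as you say, via $\mathcal{O}^{\times}=\{\pm1\}$ and $y_0>0$. From there, unique factorization of ideals gives $(\mu)=\prod_i\mathfrak{q}_i^{2\alpha_i}$, and the impossibility of $p_i\mid\mu$ yields $\gcd(A,B,N)=1$; in fact your argument shows $\gcd(A,B)=1$, since any common prime divisor of $A$ and $B$ would divide $N$ and then fall to the same contradiction. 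Your closing remark is also on point: this direction really only needs that $\mathcal{O}$ is the maximal order (so ideal factorization is available) and that the unit group is $\{\pm1\}$; the hypothesis that $C(-4D)$ is a free $\Z_2$-module is used in \cite{JMMM} to guarantee that the $\zeta_p$ exist in the first place (Lemma~\ref{lem: Lemma 3.5 of JMMM}) and for the converse Theorem~\ref{thm: Theorem 3.6 of JMMM}, but once one is handed a product $\pm\zeta_{p_1}^{\pm\alpha_1}\cdots\zeta_{p_k}^{\pm\alpha_k}$, nothing further about the class group enters.
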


Using these results we proved Theorem 4.1 of \cite{JMMM} which states the following and was the main result of that work.

\begin{thm}[Theorem 4.1 of \cite{JMMM}]\label{thm: Theorem 4.1 of JMMM}
    Given an integer $c>0$ with prime factorization $p_1^{n_1}\cdots p_k^{n_k}$ there are normalized solution of the form $(a,b,c)$ if and only if $\left(\frac{-D}{p_i}\right)=1$ for all $i\le k$. In fact if $\left(\frac{-D}{p_i}\right)=1$ for all $i\le k$ then there are exactly $2^{k-1}$ normalized solutions to \eqref{eq:Pell Eq} of the form $(a,b,c)$.
\end{thm}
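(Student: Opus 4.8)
The plan is to transfer the count to the group $G_D(\mathbb{Q})$, where the structure theorem $G_D(\mathbb{Q}) = U \times F$ (with $U = \{\pm 1\}$ and $F$ free on $\{\zeta_p : \leg{-D}{p} = 1\}$), together with Theorems \ref{thm: Theorem 3.6 of JMMM} and \ref{thm: Theorem 3.9 of JMMM}, does essentially all of the work, leaving only a bookkeeping computation with the surjection $\textbf{pt}$. First I would record the dictionary. A normalized solution $(a,b,c)$ of \eqref{eq:Pell Eq} has $\gcd(a,b)=1$ automatically (a prime dividing both $a$ and $b$ would, by \eqref{eq:Pell Eq}, divide $c$, contradicting $\gcd(a,b,c)=1$), so it corresponds to the point $z_{a,b,c} := \frac{a+b\sqrt{-D}}{c}\in G_D(\mathbb{Q})$ in lowest terms, with $\textbf{pt}(z_{a,b,c}) = (a,b,c)$. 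Since $\textbf{pt}$ identifies a point with its negative and with its complex conjugate (which only change the signs of $a$ and of $b$), the fibre of $\textbf{pt}$ over $(a,b,c)$ is the orbit $\{z, -z, \bar z, -\bar z\}$ of $z = z_{a,b,c}$ under $\Gamma := \langle z\mapsto -z,\ z\mapsto\bar z\rangle \cong (\Z/2\Z)^2$. Hence $|\textbf{PT}_c|$ equals the number of $\Gamma$-orbits in the set $S_c$ of elements of $G_D(\mathbb{Q})$ whose reduced denominator is exactly $c$, and the theorem reduces to three claims: (i) $S_c = \emptyset$ unless $\leg{-D}{p_i} = 1$ for all $i$; (ii) otherwise $|S_c| = 2^{k+1}$; (iii) $\Gamma$ acts freely on $S_c$ when $c > 1$.

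For (ii), assume $\leg{-D}{p_i} = 1$ for every $i$, so by Lemma \ref{lem: Lemma 3.5 of JMMM} each $\zeta_{p_i}\in G_D(\mathbb{Q})$ is well defined. I would then check that the $2^{k+1}$ elements $\varepsilon\,\zeta_{p_1}^{\varepsilon_1 n_1}\cdots\zeta_{p_k}^{\varepsilon_k n_k}$ with $\varepsilon,\varepsilon_i\in\{\pm 1\}$ are pairwise distinct — immediate from $G_D(\mathbb{Q}) = U\times F$ and the freeness of $F$, since two such products coincide only if the leading sign and all exponents agree — that each of them lies in $S_c$ by Theorem \ref{thm: Theorem 3.9 of JMMM}, and that every element of $S_c$ is of this form by Theorem \ref{thm: Theorem 3.6 of JMMM}; this gives $|S_c| = 2^{k+1}$. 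For (i), if $\leg{-D}{p_i}\neq 1$ for some $i$ then $\zeta_{p_i}$ is not defined, so by Theorem \ref{thm: Theorem 3.6 of JMMM} no element of $G_D(\mathbb{Q})$ can have reduced denominator $c$ (such an element would have to factor through $\zeta_{p_i}$), whence $S_c = \emptyset$ and $\textbf{PT}_c = \emptyset$; the case $p_i \mid D$ can also be dispatched by a short direct congruence argument modulo $p_i$ using $\gcd(a,b) = 1$.

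For (iii), first $\overline{\zeta_p} = \zeta_p^{-1}$, since $\zeta_p\overline{\zeta_p} = \frac{x_0^2 + Dy_0^2}{p^2} = 1$; thus complex conjugation acts on $S_c$ by inverting every factor, i.e. by flipping each $\varepsilon_i$, while negation flips $\varepsilon$. The action of $\Gamma$ on $S_c$ is then free for $c > 1$: an element of $S_c$ fixed by conjugation is real, which forces all $n_i = 0$ and $c = 1$; no element is fixed by negation; and $-\bar z = z$ would give $w^{-1} = -w$ for the $F$-component $w$ of $z$, hence $w^2 = -1\in F$, impossible as $F$ is torsion-free. Therefore every orbit has size $|\Gamma| = 4$ and $|\textbf{PT}_c| = |S_c|/4 = 2^{k+1}/4 = 2^{k-1}$. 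The degenerate case $c = 1$ ($k = 0$, unique solution $(1,0,1)$) is handled separately, and combining (i) with (ii)--(iii) yields both the existence criterion and the count.

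I expect the real difficulty to lie entirely in the quoted inputs — above all the structure theorem $G_D(\mathbb{Q}) = U\times F$ and Theorems \ref{thm: Theorem 3.6 of JMMM} and \ref{thm: Theorem 3.9 of JMMM}, which is precisely where the hypotheses $D\equiv 1, 2\pmod 4$ and "$C(-4D)$ a free $\Z_2$-module" are used. Within the present argument, the only place that needs genuine care is pinning down the dictionary: verifying that $\textbf{pt}$ restricted to $S_c$ is exactly four-to-one, equivalently that $\Gamma$ acts freely (the fixed-point analysis above, which is where the real and purely imaginary degenerate points get ruled out), and that "reduced denominator exactly $c$" on the $G_D(\mathbb{Q})$ side corresponds precisely to "$(a,b,c)$ with $\gcd(a,b)=1$ and $a^2 + Db^2 = c^2$" on the solution side. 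Once that bijective correspondence is fixed, the count is a one-line orbit--stabilizer computation.
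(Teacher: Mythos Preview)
Your proposal is correct and follows essentially the same approach as the paper: both arguments identify the normalized solutions with $\Gamma$-orbits in the set $T_2 = \{\pm\zeta_{p_1}^{\pm n_1}\cdots\zeta_{p_k}^{\pm n_k}\}$ (your $S_c$), invoking Theorems~\ref{thm: Theorem 3.6 of JMMM} and~\ref{thm: Theorem 3.9 of JMMM} and the structure theorem $G_D(\Q)=U\times F$ for the identification, and then quotient by $\Gamma$ to get $2^{k-1}$. Your write-up is, if anything, slightly more explicit than the paper's sketch in verifying that $\Gamma$ acts freely on $S_c$ when $c>1$, but this is a presentational refinement rather than a different strategy.
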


Let $\Gamma$ be the abelian group of order 4 generated by multiplication by $-1$ and complex conjugation. In order to prove this theorem, we defined the sets
$$T_1\ :=\ \left\{\frac{a + b\sqrt{-D}}{c} : a^2 + Db^2 = c^2, (a,b) = 1, c > 1\right\},$$
and \be T_2 \ := \  \left\{\pm \zeta_{p_1}^{\epsilon_1 n_1} \zeta_{p_2}^{\epsilon_2 n_2} \cdots \zeta_{p_k}^{\epsilon_k n_k} : \epsilon_i \in \{\pm1\}\right\}.\ee Then we exhibited a bijection between $T_1$ and 
\be T_2/\Gamma \ = \  \left\{\zeta_{p_1}^{n_1} \zeta_{p_2}^{\epsilon_2 n_2} \cdots \zeta_{p_k}^{\epsilon_k n_k} : \epsilon_i \in \{\pm1\}\right\}\ee where $\Gamma$ is the abelian group of order 4 generated by complex conjugation and multiplication by $-1$.
Note that $T_2/\Gamma$ has $2^{k-1}$ many elements which explains why there are $2^{k-1}$ many normalized solutions.
However, proving these similar results for $D>1$ was more difficult than the $D=1$ case when $D=1$, Yekutieli heavily relied on the fact that $\Z[\sqrt{-1}]$ is a PID, which can fail for $\Z[\sqrt{-D}]$ where $D>1$. That is why we had to impose the extra condition of the class group $C(-4D)$ being a free $\Z_2$-module.

\ \\

In this survey article we fix a value of $D$, and verify the main results of \cite{JMMM} and compute everything explicitly to highlight the key ideas of the method. We choose $D=105$ so that the class group becomes $C(-420)=\Z_2\times \Z_2\times \Z_2$. We then choose certain values of $c$ and verify Theorem 3.6, Theorem 3.9, and particularly, Theorem 4.1 by explicitly finding the normalized solutions of the form $(a,b,c)$.

\section{Proof of the Main Results}

We provide examples to illustrate the theory we developed in \cite{JMMM}. Recall Pell's equation
\begin{equation}\label{eq: Pell eq}
    x^2+Dy^2=z^2.
\end{equation}
We are interested in positive integral solutions of this equation. Let us choose $D=105$. We state and prove the following claims.

\begin{cla}\label{cl: order less than 2}
    The class represented by the reduced form $ax^2+bxy+cy^2$ has order $\le 2$ in the group $C(-4D)$ \textit{if and only if} $b=0$, $a=b$ or $a=c$.
\end{cla}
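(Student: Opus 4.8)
The claim is about the order of a form class in $C(-4D)$, so the natural framework is Gauss composition together with the theory of reduced binary quadratic forms. The key fact I would invoke is that an element of the form class group has order dividing $2$ precisely when its square is the principal class, i.e.\ when the form is equivalent to its inverse $(a,-b,c)$; since the inverse of a reduced form is already essentially reduced, this happens exactly when $(a,b,c)$ and $(a,-b,c)$ are properly equivalent. For reduced forms the only ambiguity in the reduction is on the boundary of the fundamental domain, so I expect the condition ``$(a,b,c)\sim(a,-b,c)$'' to translate directly into the boundary conditions $b=0$, $|b|=a$, or $a=c$. So the backbone of the argument is: (i) order $\le 2 \iff$ the class is \emph{ambiguous} (equal to its own inverse); (ii) ambiguous $\iff$ $b=0$ or $a=b$ or $a=c$ for a \emph{reduced} form.

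**Steps in order.** First I would recall the standard normalization: a primitive positive-definite form $ax^2+bxy+cy^2$ of discriminant $-4D<0$ is reduced when $|b|\le a\le c$, with $b\ge 0$ whenever $|b|=a$ or $a=c$; and each class contains exactly one reduced form. Second, I would note that the inverse of the class $[a,b,c]$ is $[a,-b,c]$, and that $[a,b,c]$ has order $\le 2$ iff $[a,b,c]=[a,-b,c]$ in $C(-4D)$. Third, the easy direction: if $b=0$, then $(a,0,c)=(a,-0,c)$ trivially; if $a=c$, the substitution $(x,y)\mapsto(-y,x)$ sends $(a,b,a)$ to $(a,-b,a)$; if $a=b$, the substitution $(x,y)\mapsto(x-y,y)$ (or $(x+y,y)$, up to sign convention) sends $(a,a,c)$ to $(a,-a,c')$ which reduces back appropriately — in each case exhibiting an explicit $\mathrm{SL}_2(\Z)$ equivalence between the form and its inverse, hence order $\le 2$. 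Fourth, the converse: suppose $[a,b,c]$ has order $\le 2$, so $(a,b,c)\sim(a,-b,c)$; if the form $(a,b,c)$ is reduced with $0<b<a<c$ strictly, then $(a,-b,c)$ is a \emph{different} reduced form (it fails the sign convention only in the boundary cases), contradicting uniqueness of the reduced representative; therefore one of the boundary equalities $b=0$, $|b|=a$ (i.e.\ $a=b$ after fixing the sign convention $b\ge 0$), or $a=c$ must hold.

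**Main obstacle.** The delicate point is the converse and the bookkeeping of sign conventions: I need the precise statement that a reduced form is \emph{unique} in its class, and I must be careful that $(a,-b,c)$ is itself reduced (it is, since $|{-b}|=|b|$ and $a,c$ are unchanged) so that comparing it with the reduced form $(a,b,c)$ is legitimate. The only way $(a,b,c)$ and $(a,-b,c)$ can be the \emph{same} reduced form is $b=0$, or $b$ having the forced sign because we are in a boundary case $|b|=a$ or $a=c$ — and in those boundary cases the convention pins $b\ge 0$, which is exactly the content of ``$a=b$ or $a=c$'' in the claim (the hypothesis that the form is reduced already forces $b=|b|$ there). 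I would also double-check one subtlety: composition/inverse formulas require primitivity, so I would implicitly assume (as is standard in this setting) that the forms in $C(-4D)$ are primitive. Beyond that, the argument is a clean dichotomy — interior of the fundamental domain gives order $>2$, boundary gives order $\le 2$ — and the rest is routine verification of the three explicit substitutions.
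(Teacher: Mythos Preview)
Your proposal is correct and is precisely the standard argument; the paper itself gives no proof beyond citing Lemma~3.10 of \cite{Cox}, and what you have sketched (order $\le 2$ $\iff$ the class equals its own inverse $[a,-b,c]$ $\iff$, by uniqueness of the reduced representative, the reduced form lies on the boundary of the fundamental domain) is exactly that lemma. The only cosmetic point to tidy up is that in the converse you should phrase the interior case as $0<|b|<a<c$ rather than $0<b<a<c$, since a reduced interior form can have $b<0$; then both $(a,b,c)$ and $(a,-b,c)$ are reduced and distinct, and uniqueness gives the contradiction.
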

\begin{proof}
Recall that a primitive positive definite form $[a,b,c]$ is said to be \textit{reduced} if $\abs{b}\le a\le c$ and $b\ge 0$ whenever $\abs{b}=a$ or $a=c$.
The proof of this claim is essentially Lemma 3.10 of \cite{Cox}.
\end{proof}
\begin{cla}
    The class group of binary quadratic form with discriminant $-4D=-420$ is isomorphic to $\Z_2\times \Z_2\times \Z_2$.
\end{cla}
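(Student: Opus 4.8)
The plan is to combine an explicit enumeration of the reduced forms of discriminant $-420$ with Claim~\ref{cl: order less than 2}. First I would list every primitive positive definite reduced form $[a,b,c]$ with $b^2-4ac=-420$. Since a reduced form satisfies $3a^2 \le 4ac - b^2 = 420$, we have $a \le \sqrt{140} < 12$, and since $-420$ is divisible by $4$, $b$ must be even; so only the finitely many pairs $(a,b)$ with $0 \le |b| \le a \le 11$ and $b$ even need to be checked. For each such pair one tests whether $c = (b^2+420)/(4a)$ is a positive integer with $c \ge a$ and $\gcd(a,b,c)=1$, discarding $b<0$ whenever $|b|=a$ or $a=c$. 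Carrying out this sieve yields exactly the eight forms
\begin{gather*}
[1,0,105],\ [2,2,53],\ [3,0,35],\ [5,0,21], \\
[6,6,19],\ [7,0,15],\ [10,10,13],\ [11,8,11],
\end{gather*}
so $|C(-420)| = 8$.

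Next I would observe that each of these eight forms satisfies one of $b=0$, $a=b$, or $a=c$; by Claim~\ref{cl: order less than 2} every class of $C(-420)$ therefore has order at most $2$. Hence $C(-420)$ is an abelian group of order $8$ in which every element squares to the identity, i.e., an elementary abelian $2$-group, and the only such group of order $8$ is $\Z_2 \times \Z_2 \times \Z_2$, which is the claim.

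As an independent check one may invoke genus theory: writing $-420 = (-4)\cdot(-3)\cdot 5 \cdot(-7)$ as a product of four prime discriminants, the number of genera of discriminant $-420$ is $2^{4-1}=8$. Since the number of genera equals the index of the subgroup of squares in $C(-420)$, and here it equals the class number, the principal genus is trivial; thus $C(-420)^2 = \{1\}$, and again $C(-420) \cong \Z_2^3$.

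The only substantive work is the enumeration in the first step: one must verify the divisibility condition $4a \mid b^2 + 420$ and the primitivity condition across all candidate pairs $(a,b)$, which is entirely routine but must be carried out without omissions. Once the eight reduced forms are in hand, the conclusion is immediate from Claim~\ref{cl: order less than 2}.
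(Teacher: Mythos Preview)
Your argument is correct and follows essentially the same route as the paper: enumerate the eight reduced forms of discriminant $-420$, invoke Claim~\ref{cl: order less than 2} to see every class has order at most $2$, and conclude via the structure theorem that $C(-420)\cong\Z_2^3$. The only differences are cosmetic---you carry out the enumeration by hand with the bound $a\le 11$ rather than citing Cohen's algorithm, and you append a genus-theory cross-check---but the core proof is the same.
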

\begin{proof}
In order to prove this claim, we explicitly find the  equivalence classes, i.e., elements of the class group $C(-420)$. 
The equivalence classes are
$$\overline{[1,0,105]}, \overline{[3,0,35]}, \overline{[5,0,21]}, \overline{[7,0,15]}, \overline{[2,2,53]}, \overline{[6,6,19]}, \overline{[11,8,11]}, \overline{[10,10,13]}$$
where $\overline{[a,b,c]}$ denotes the equivalence class represented by $[a,b,c]$. Note that each of these primitive forms are reduced and using Claim \ref{cl: order less than 2} we can infer that the all of the equivalence classes has order less than or equal to 2.
To compute these elements of $C(-420)$ we used a program which was based on the Algorithm 5.3.5 of \cite{Cohen}. The proof of this algorithm's correctedness is in fact Lemma 5.3.4 of the same book. So the cardinality of the class group $C(-420)$ is 8 and each element of this group has order $\leq 2$. Using the structure theorem of finite abelian groups, we infer that the class group $C(-420)$ is indeed isomorphic to $\Z_2\times \Z_2\times \Z_2$.
\end{proof}

We have that $C(-420)$ is a free $\Z_2$ module. Now, we choose five different values of $c$ and for each of those values we exhibit the results we developed in our paper. 

\begin{enumerate}
    \item $c=11\times 13$.
    \item $c=13\times 19$.
    \item $c=11\times 19$.
    \item $c=11^2\times 13^3$.
    \item $c=11\times 13\times 19$.
\end{enumerate}

Note that in each of the above choices of $c$, its prime factors are 11, 13 or 19, which satisfy $\left(\frac{-105}{11}\right)=\left(\frac{-105}{13}\right)=\left(\frac{-105}{19}\right)=1$. By Lemma 3.3 of the \cite{JMMM}, for each such choices of $c$, we will attain normalized solutions of the form $(a,b,c)$. Note that 
\begin{align*}
4^2+105\cdot 1^2&=(11)^2,\\
8^2+105\cdot 1^2&=(13)^2,\\
16^2+105\cdot 1^2&=(19)^2.
\end{align*}
Then we have
$$\zeta_{11}\ := \ \frac{4+\sqrt{-105}}{11},\  \ \ \zeta_{13}\ :=\ \frac{8+\sqrt{-105}}{13},\  \ \ \zeta_{19} \ := \ \frac{16+\sqrt{-105}}{19}.$$
Now, observe the following.

\begin{enumerate}
    \item By Theorem \ref{thm: Theorem 4.1 of JMMM}, we expect to have 2 normalized solutions. Since $11 \cdot 13$ has two prime factors, the theorem tells us we will have $2^1$ many normalized solutions. One can check that there are only two normalized solutions of the form $(a, b, 11 \times 13): (73, 12, 11\cdot 13)$ and $(137, 4, 11 \cdot 13)$. The solution $(73, 12, 11\cdot 13)$ corresponds to $\frac{73+12\sqrt{-105}}{11\cdot 13}\in G_D(\Q)$ and $(137, 4, 11\cdot 13)$ corresponds to $\frac{137+4\sqrt{-105}}{11\cdot 13}\in G_D(\Q)$ up to sign. Also, the following equalities hold:
    $$\frac{-73+12\sqrt{-105}}{11\cdot 13} \ = \ 
    \zeta_{11}\zeta_{13}$$
    
    $$\frac{137-4\sqrt{-105}}{11\cdot 13} \ = \
    \zeta_{11}^{-1}\zeta_{13}$$
    
    $$\frac{137+4\sqrt{-105}}{11\cdot 13} \ = \
    \zeta_{11}\zeta_{13}^{-1}$$
    
    $$\frac{-73-12\sqrt{-105}}{11\cdot 13} \ = \
    \zeta_{11}^{-1}\zeta_{13}^{-1}.$$
    
    Each of the normalized solutions $(a,b,11\cdot 13)$ corresponds to an element of the form $\pm \zeta_{11}^{\pm 1}\zeta_{13}^{\pm 1}$ in $G_D(\Q)$. However, since we are considering positive integral solutions only, there only 2 positive integral solutions to \eqref{eq: Pell eq} where $D=105$. This observation supports the claim of Theorem \ref{thm: Theorem 4.1 of JMMM}. In the case of this example, the bijection between $T_2/\Gamma$ and the set of normalized solutions is as follows:
    $$\overline{-\zeta_{11}^{-1}\zeta_{13}^{-1}} \ \mapsto\ (73,12,11 \cdot 13)$$
    $$\overline{\zeta_{11}\zeta_{13}^{-1}}\ \mapsto\
    (137, 4, 11\cdot 13),$$
    hence there are only 2 normalized solutions. We present the above information in the table below. 

    \par\vspace{0.2cm}
    \begin{center}
\begin{table}[ht]
    \begin{tabular}{| m{5cm} | m{5.5cm}|}
         \hline
  Expected number of solutions & $2^1 = 2$ \\ 
  \hline
  Actual solutions & $(73, 12, 11\cdot 13),  
  (137, 4, 11 \cdot 13)$\\ 
  \hline
  Solutions as elements of $G_D(\mathbb{Q})$ & $\frac{73+12\sqrt{-105}}{11\cdot 13}$, $\frac{137+4\sqrt{-105}}{11\cdot 13}$  \\ 
  \hline
  Factorization of elements &  $\frac{-73+12\sqrt{-105}}{11\cdot 13}=
    \zeta_{11}\zeta_{13}$
    
    $\frac{137-4\sqrt{-105}}{11\cdot 13}\ = \ 
    \zeta_{11}^{-1}\zeta_{13}$
    
    $\frac{137+4\sqrt{-105}}{11\cdot 13} \ = \
    \zeta_{11}\zeta_{13}^{-1}$
    
    $\frac{-73-12\sqrt{-105}}{11\cdot 13} \ = \
    \zeta_{11}^{-1}\zeta_{13}^{-1}$ \\ 
  \hline
  Bijection between $T_2/\Gamma$ and normalized solutions & $\overline{-\zeta_{11}^{-1}\zeta_{13}^{-1}}\ \mapsto \ (73,12,11 \cdot 13)$
    $\overline{\zeta_{11}\zeta_{13}^{-1}} \ \mapsto\ 
    (137, 4, 11\cdot 13)$  \\ 
  \hline
\end{tabular}
\caption{$D = 11\cdot 13$.}
\end{table}
\end{center}
    
    \item Again we have two prime factors, so \ref{thm: Theorem 4.1 of JMMM} states that there should be 2 normalized solutions. The two normalized solutions of the form $(a, b, 13 \times 19)$ are $(23, 24, 13\cdot 19)$ and $(233, 8, 13 \cdot 19)$. Writing these solutions as elements of $G_D(\Q)$, we get $\frac{23+24\sqrt{-105}}{13\cdot 19}$ and $\frac{233+8\sqrt{-105}}{13\cdot 19}$. We can then represent the elements of $G_D(\Q)$ as products of $\zeta_p$ terms: 
    $$\frac{23+24\sqrt{-105}}{13\cdot 19}\ = \
    \zeta_{13}\zeta_{19}$$
    
    $$\frac{233-8\sqrt{-105}}{13\cdot 19}\ =\ 
    \zeta_{13}^{-1}\zeta_{19}$$
    
    $$\frac{233 + 8\sqrt{-105}}{13\cdot 19} \ = \
    \zeta_{13}\zeta_{19}^{-1}$$
    
    $$\frac{23-24\sqrt{-105}}{13\cdot 19}\ = \ 
    \zeta_{13}^{-1}\zeta_{19}^{-1}.$$
    
    Therefore, each solution of \eqref{eq: Pell eq} corresponds to an element of the form $\pm \zeta_{13}^{\pm 1}\zeta_{19}^{\pm 1}$ in $G_D(\Q)$. We only consider positive integral solutions, which correspond to the elements $\zeta_{13}\zeta_{19}$. Therefore, the bijection between $T_2/\Gamma$ and the set of normalized solutions is given by
    $$\overline{\zeta_{13}\zeta_{19}}\ \mapsto\ (23,24,13 \cdot 19)$$
    $$\overline{\zeta_{13}\zeta_{19}^{-1}}\ \mapsto\ 
    (233, 8, 13\cdot 19).$$
    \par\vspace{0.2cm}
    
\begin{center}
\begin{table}[ht]
\begin{tabular}{| m{5cm} | m{5.5cm}|}
         \hline
  Expected number of solutions & $2^1 = 2$ \\ 
  \hline
  Actual solutions & $(23, 24, 13\cdot 19),  
  (233, 8, 13 \cdot 19)$\\ 
  \hline
  Solutions as elements of $G_D(\mathbb{Q})$ & $\frac{23+24\sqrt{-105}}{13\cdot 19}$, $\frac{233+8\sqrt{-105}}{13\cdot 19}$  \\ 
  \hline
  Factorization of elements &  $\frac{23+24\sqrt{-105}}{13\cdot 19}\ = \ 
    \zeta_{13}\zeta_{19}$
    
    $\frac{233-8\sqrt{-105}}{13\cdot 19} \ = \ 
    \zeta_{13}^{-1}\zeta_{19}$
    
    $\frac{233+8\sqrt{-105}}{13\cdot 19}\ = \
    \zeta_{13}\zeta_{19}^{-1}$
    
    $\frac{23-24\sqrt{-105}}{13\cdot 19}=
    \zeta_{13}^{-1}\zeta_{19}^{-1}$ \\ & \\
  \hline
  Bijection between $T_2/\Gamma$ and normalized solutions & $\overline{\zeta_{13}\zeta_{19}}\mapsto (23,24,13 \cdot 19)$
    $\overline{\zeta_{13}\zeta_{19}^{-1}}\mapsto
    (233, 8, 13\cdot 19)$  \\ 
  \hline
\end{tabular}
\caption{$D=13\cdot 19$.}
\end{table}
\end{center}
    
    \item This example follows a very similar format to the two preceding it. We get 2 normalized solutions of the form $(a, b, 11 \times 19): (41, 20, 11\cdot 19)$ and $(169, 12, 11 \cdot 19)$, which is what  \ref{thm: Theorem 4.1 of JMMM} predicts. If we write these as solutions as elements of $G_D(\Q)$ we get $\frac{41+20\sqrt{-105}}{11\cdot 19}$ $\frac{169+12\sqrt{-105}}{11\cdot 19}$. We write the factorization of solutions of the form $(a, b, 11 \times 19)$:
    $$\frac{-41+20\sqrt{-105}}{11\cdot 19}\ = \ 
    \zeta_{11}\zeta_{19}$$
    
    $$\frac{169-12\sqrt{-105}}{11\cdot 19} \ = \ 
    \zeta_{11}^{-1}\zeta_{19}$$
    
    $$\frac{169+12\sqrt{-105}}{11\cdot 19} \ = \ 
    \zeta_{11}\zeta_{19}^{-1}$$
    
    $$\frac{-41-20\sqrt{-105}}{11\cdot 19}\ = \ 
    \zeta_{11}^{-1}\zeta_{19}^{-1}.$$
    
    The positive solutions then correspond to the elements $\zeta_{13}\zeta_{19}$ and $\zeta_{13}\zeta_{19}^{-1}$, so the bijection between $T_2/\Gamma$ and the set of normalized solutions is given by
    $$\overline{-\zeta_{11}^{-1}\zeta_{19}^{-1}}\ \mapsto\  (41,20,11 \cdot 19)$$
    $$\overline{\zeta_{11}\zeta_{19}^{-1}}\ \mapsto\ 
    (169, 12, 11\cdot 19).$$
    
\begin{center}
\begin{table}[H]
\begin{tabular}{ | m{5cm} | m{5.5cm}| } 
\hline
  Expected number of solutions & $2^1 = 2$ \\ 
  \hline
  Actual solutions & $(41, 20, 11\cdot 19),  
  (169, 12, 11 \cdot 19)$\\ 
  \hline
  Solutions as elements of $G_D(\mathbb{Q})$ & $\frac{41+20\sqrt{-105}}{11\cdot 19}$, $\frac{169+12\sqrt{-105}}{11\cdot 19}$  \\ 
  \hline
  Factorization of elements &  $\frac{-41+20\sqrt{-105}}{11\cdot 19}\ = \ 
    \zeta_{11}\zeta_{19}$
    
    $\frac{169-12\sqrt{-105}}{11\cdot 19}\ = \ 
    \zeta_{11}^{-1}\zeta_{19}$
    
    $\frac{169+12\sqrt{-105}}{11\cdot 19} \ = \ 
    \zeta_{11}\zeta_{19}^{-1}$
    
    $\frac{-41-20\sqrt{-105}}{11\cdot 19}\ =\ 
    \zeta_{11}^{-1}\zeta_{19}^{-1}$ \\ & \\
  \hline
  Bijection between $T_2/\Gamma$ and normalized solutions & $\overline{-\zeta_{11}^{-1}\zeta_{19}^{-1}} \ \mapsto\  (41,20,11 \cdot 19)$
    $\overline{\zeta_{11}\zeta_{19}^{-1}} \ \mapsto\ 
    (169, 12, 11\cdot 19)$  \\ 
  \hline
\end{tabular}
\caption{$D=11\cdot 19$.}
\end{table}
\end{center}
    
    \item In this example, the number $11^2 \cdot 13^2$ still only has 2 distinct prime factors, so \ref{thm: Theorem 4.1 of JMMM} states that there will 2 normalized solutions of the form $(a,b,11^2\times 13^3)$. One can check that these solutions are $(251792, 8321, 11^2\cdot 13^3)$ and $(105632, 23807, 11^2\cdot 13^3)$. Writing these solutions as elements of $G_D(\Q)$ we get $\frac{251792+8321\sqrt{-105}}{11^2\cdot 13^3}$ and $\frac{105632+23807\sqrt{-105}}{11^2\cdot 13^3}$. We factor these elements into products of $\zeta_p$ by:
    $$\frac{251792+8321\sqrt{-105}}{11^2\cdot 13^3}\ = \
    \left(\zeta_{11}\right)^{-2}\left(\zeta_{13}\right)^{3}$$
    
    $$\frac{251792-8321\sqrt{-105}}{11^2\cdot 13^3} \ = \ 
    \left(\zeta_{11}\right)^{2}\left(\zeta_{13}\right)^{-3}$$
    
    $$\frac{105632-23807\sqrt{-105}}{11^2\cdot 13^3} \ = \ 
    \left(\zeta_{11}\right)^{2}\left(\zeta_{13}\right)^{3}$$
    
    $$\frac{105632+23807\sqrt{-105}}{11^2\cdot 13^3} \ = \ 
    \left(\zeta_{11}\right)^{-2}\left(\zeta_{13}\right)^{-3}.$$
    
    The positive normalized solutions are $\frac{251792+8321\sqrt{-105}}{11^2\cdot 13^3}$ and $\frac{105632+23807\sqrt{-105}}{11^2\cdot 13^3}$, which correspond to the elements $\left(\zeta_{11}\right)^{-2}\left(\zeta_{13}\right)^{3}$ and $\left(\zeta_{11}\right)^{-2}\left(\zeta_{13}\right)^{-3}$ respectively. Therefore, the bijection between $T_2/\Gamma$ and the set of normalized solutions is as follows:
    $$\overline{\zeta_{11})^{-2}(\zeta_{13})^3} \ \mapsto\  (251792,8321,11^213^3)$$
    $$\overline{(\zeta_{11})^{-2}(\zeta_{13})^{-3}}\ \mapsto \ 
    (105632, 23807, 11^213^3).$$
    
\begin{center}
\begin{table}[H]
\begin{tabular}{ | m{5cm} | m{7cm}| } 
\hline
  Expected number of solutions & $2^1 = 2$ \\ 
  \hline
  Actual solutions & $(251792, 8321, 11^2\cdot13^3)$,   
  \\ & $(105632, 23807, 11^2 \cdot 13^3)$\\ 
  \hline
  Solutions as elements of $G_D(\mathbb{Q})$ & $\frac{251792+8321\sqrt{-105}}{11^2\cdot 13^3}$, $\frac{105632+23807\sqrt{-105}}{11^2\cdot 13^3}$  \\ 
  \hline
  Factorization of elements &  $\frac{105632- 23807\sqrt{-105}}{11^2\cdot 13^3} \ = \ 
    \zeta_{11}^2\zeta_{13}^3$

    $\frac{251792+ 8321\sqrt{-105}}{11^2\cdot 13^3} \ = \ 
    \zeta_{11}^{-2}\zeta_{13}^3$
    
    $\frac{251792-8321\sqrt{-105}}{11^2\cdot 13^3} \ = \ 
    \zeta_{11}^2\zeta_{13}^{-3}$
    
    $\frac{105632+23807\sqrt{-105}}{11^2\cdot 13^3} \ = \ 
    \zeta_{11}^{-2}\zeta_{13}^{-3}$ \\ & \\ 
  \hline
  Bijection between $T_2/\Gamma$ and normalized solutions &  $\overline{-\zeta_{11}^{-2}\zeta_{13}^3}\ \mapsto \ (251792,8321,11^2 \cdot 13^3)$ \\ & \\ &
    $\overline{\zeta_{11}^{-2}\zeta_{13}^{-3}}\ \mapsto\
    (105632, 23807, 11^2\cdot 13^3)$  \\ 
    
  \hline
\end{tabular}
\caption{$D=11^2\cdot 13^3$.}
\end{table}
\end{center}
\vspace{0.5cm}

    \item This example differs from the previous ones, because there are 3 prime factors of $11\cdot 13 \cdot 19$ rather than 2. Therefore, Theorem \ref{thm: Theorem 4.1 of JMMM} states that there will be $2^2 = 4$ normalized solutions. One can check that these solutions are 
    $(2612, 73, 11\cdot 13\cdot 19),
    (2428, 119, 11\cdot 13\cdot 19),
    (1772, 201, 11\cdot 13\cdot 19),$ and 
    $(92, 265, 11\cdot 13\cdot 19)$.
    We factor these solutions, written as elements of $G_D(\Q)$ as follows: 
    $$\frac{-2428+119\sqrt{-105}}{11\cdot 13\cdot 19}=\zeta_{11}\zeta_{13}\zeta_{19},\ \frac{2612+73\sqrt{-105}}{11\cdot 13\cdot 19}=\zeta_{11}^{-1}\zeta_{13}\zeta_{19}$$
    $$\frac{1772+201\sqrt{-105}}{11\cdot 13\cdot 19}=\zeta_{11}\zeta_{13}^{-1}\zeta_{19}, \ \frac{92-265\sqrt{-105}}{11\cdot 13\cdot 19}=\zeta_{11}^{-1}\zeta_{13}^{-1}\zeta_{19}$$
    $$\frac{92+265\sqrt{-105}}{11\cdot 13\cdot 19}=\zeta_{11}\zeta_{13}\zeta_{19}^{-1},\ \frac{1772-201\sqrt{-105}}{11\cdot 13\cdot 19}=\zeta_{11}^{-1}\zeta_{13}\zeta_{19}^{-1}$$
    $$\frac{2612-73\sqrt{-105}}{11\cdot 13\cdot 19}=\zeta_{11}\zeta_{13}^{-1}\zeta_{19}^{-1}, \ \frac{-2428-119\sqrt{-105}}{11\cdot 13\cdot 19}=\zeta_{11}^{-1}\zeta_{13}^{-1}\zeta_{19}^{-1}.$$
    
    So, each of the normalized solutions of the form $(a,b,11\cdot 13\cdot 19)$ corresponds to the elements of the form $\pm \zeta_{11}^{\pm 1}\zeta_{13}^{\pm 1}\zeta_{19}^{\pm 1}$ in $G_D(\Q)$. The positive solutions are given by $\zeta_{11}^{-1}\zeta_{13}\zeta_{19}, -\zeta_{11}^{-1}\zeta_{13}^{-1}\zeta_{19}^{-1}, \zeta_{11}\zeta_{13}^{-1}\zeta_{19},$ and $\zeta_{11}\zeta_{13}\zeta_{19}^{-1}$. Therefore, the bijection between $T_2/\Gamma$ and the set of normalized solutions is as follows:
    $$\overline{\zeta_{11}^{-1}\zeta_{13}\zeta_{19}}\mapsto (2612, 73, 11\cdot 13 \cdot 19)$$
    $$\overline{-\zeta_{11}^{-1}\zeta_{13}^{-1}\zeta_{19}^{-1}}\mapsto
    (2428, 119, 11\cdot 13 \cdot 19)$$
    $$\overline{\zeta_{11}\zeta_{13}^{-1}\zeta_{19}}\mapsto (1772, 201, 11\cdot 13 \cdot 19)$$
    $$\overline{\zeta_{11}\zeta_{13}\zeta_{19}^{-1}}\mapsto
    (92, 265, 11\cdot 13 \cdot 19).$$

\begin{center}
\begin{table}[H]
\begin{tabular}{ | m{5cm} | m{7 cm}| } 
\hline
  Expected number of solutions & $2^2 = 4$ \\ 
  \hline
  Actual solutions & $(2612, 73, 11\cdot 13\cdot 19)$,  
  \\ & $(2428, 119, 11 \cdot 13\cdot 19)$, 
  \\ & $(1772, 201, 11 \cdot 13\cdot 19)$, 
  \\ & $(92, 265, 11 \cdot 13\cdot 19)$\\ 
  \hline
  Solutions as elements of $G_D(\mathbb{Q})$ & $\frac{2612+73\sqrt{-105}}{11\cdot 13\cdot 19}$, $\frac{2428+119\sqrt{-105}}{11\cdot 13\cdot 19}$,
  \\ & $\frac{1772+201\sqrt{-105}}{11\cdot 13\cdot 19}$, $\frac{92+265\sqrt{-105}}{11\cdot 13\cdot 19}$ \\ &
  { } \\
  \hline
  Factorization of elements &  $\frac{-2428+119\sqrt{-105}}{11\cdot 13\cdot 19}\ = \ 
    \zeta_{11}\zeta_{13}\zeta_{19}$
    
    $\frac{2612+73\sqrt{-105}}{11\cdot 13\cdot 19}\ = \ 
    \zeta_{11}^{-1}\zeta_{13}\zeta_{19}$
    
    $\frac{1772+201\sqrt{-105}}{11\cdot 13\cdot 19}\ = \
    \zeta_{11}\zeta_{13}^{-1}\zeta_{19}$
    
    $\frac{92-265\sqrt{-105}}{11\cdot 13\cdot 19}\ = \ 
    \zeta_{11}^{-1}\zeta_{13}^{-1}\zeta_{19}$
    
    $\frac{92+ 265\sqrt{-105}}{11\cdot 13\cdot 19} \ = \ 
    \zeta_{11}\zeta_{13}\zeta_{19}^{-1}$
    
    $\frac{1772-201\sqrt{-105}}{11\cdot 13\cdot 19}\ = \ 
    \zeta_{11}^{-1}\zeta_{13}\zeta_{19}^{-1}$
    
    $\frac{2612-73\sqrt{-105}}{11\cdot 13\cdot 19}\ = \ 
    \zeta_{11}\zeta_{13}^{-1}\zeta_{19}^{-1}$
    
    $\frac{-2428-119\sqrt{-105}}{11\cdot 13\cdot 19} \ = \ 
    \zeta_{11}^{-1}\zeta_{13}^{-1}\zeta_{19}^{-1}$\\ & \\ 
  \hline
  Bijection between $T_2/\Gamma$ and normalized solutions & $\overline{\zeta_{11}^{-1}\zeta_{13}\zeta_{19}}\mapsto (2612,73,11 \cdot 13\cdot 19)$
    $\overline{-\zeta_{11}^{-1}\zeta_{13}^{-1}\zeta_{19}^{-1}} \mapsto
    (2428, 119, 11\cdot 13\cdot 19)$ 
    $\overline{\zeta_{11}\zeta_{13}^{-1}\zeta_{19}}\mapsto (1772,201,11 \cdot 13\cdot 19)$
    $\overline{\zeta_{11}\zeta_{13}\zeta_{19}^{-1}}\mapsto
    (92, 265, 11\cdot 13\cdot 19)$\\ 
  \hline
\end{tabular}
\caption{$D=11\cdot13\cdot 19$.}
\end{table}
\end{center}

\end{enumerate}

From the above set of examples we observe the following.

\begin{itemize}
    \item From examples (1), (2), (3) we note that the prime factors (other than satisfying $\left(\frac{-420}{p}=1\right)$ where $p=11,13,19$) in particular has no contribution to the number of normalized solutions. In each of these examples we have changed the set of prime factors yet the number of distinct normalized solution of the form $(a,b,c)$ is always 2.
    \item From (4) we observe that the exponent of the prime factors of $c$ has no significance in case of determining the number of distinct normalized solutions. In this particular examples we have raised $11$ and $13$ to higher exponents yet the number of normalized solutions of the form $(a,b,c)$ is still 2.
    \item From $(5)$ we note that the number of prime factors takes importance in case of determining the number of distinct normalized solutions. Here the number of primes factors of $c$ is no longer 2 unlike the earlier cases. Consequently we note that the normalized solution of the form $(a,b,c)$ is now $4$ instead of $2$.
\end{itemize}      

From these above set of examples we have verified the main results of \cite{JMMM}, i.e., Theorem 3.6, Theorem 3.9 and Theorem 4.1.

The theorem holds for values of $D$ when when $-D \equiv 2,3$ mod 4 and when $C(-4D)$ is a free $\mathbb{Z}_2$ module. There are 65 numbers called Euler's convenient numbers in \cite{Cox}. They are numbers $n$ such that $C(-4n)$ is a free $\mathbb{Z}_2$ module. They are conjectured to be the only numbers $n$ with this property, but there can be at most two more such numbers. Therefore if we take the subset of these numbers that are square free and equivalent to 1 or 2 mod 4 we get the following list, these are the known values of $D$ for which the theorem holds: 1, 2, 5, 6, 8, 10, 13, 21, 22, 30, 33, 37, 42, 57, 58, 70, 78, 85, 93, 102, 105, 130, 133, 165, 177, 190, 
210, 253, 273, 330, 345, 357, 385, 462, and 1365.

Our analysis crucially uses the structure of the class group, in particular that each element has order two. A natural next project is to investigate what happens for other structures, and see if similarly comprehensive classifications hold.


\ \\

\end{document}